\documentclass[11pt,reqno]{amsart}
\usepackage{amsmath,amssymb,amsxtra,latexsym,amsthm,amscd,amsfonts,mathrsfs,pb-diagram}
\usepackage[colorlinks=true,linkcolor=magenta,citecolor=blue]{hyperref}
\usepackage[margin=1in]{geometry}
\usepackage{enumitem}
\usepackage{mathabx}
\usepackage{epsfig}
\newtheorem{example}{Example}
\newtheorem{corollary}{Corollary}

\usepackage{color}

\begin{document}
	\title[\hfil ]{EFFECT OF DIFFERENT ADDITIONAL $L^{m}$ REGULARITY ON SEMI-LINEAR DAMPED $\sigma$-EVOLUTION MODELS}
\thanks{$^\star$ Corresponding author}
	{}  
	{}  
\subjclass[2010]{35A01, 35L30, 35B33, 35B45, 35B44}

\keywords{$\sigma$-evolution equation, structural damping; global existence, critical exponent, additional regularity.}

	\maketitle
	\centerline{\scshape Said Khaldi$^\star$}
	\medskip
	
	{\footnotesize
	\centerline{Laboratory of Analysis and Control of PDEs, Djillali Liabes University}
	\centerline{P.O. Box 89, Sidi Bel Abbes 22000, Algeria}
	\centerline{Emails: saidookhaldi@gmail.com,\ said.khaldi@univ-sba.dz}
	}
\medskip

\centerline{\scshape Fatima Zahra Arioui}
\medskip
{\footnotesize
	\centerline{Laboratory of Statistics and Stochastic Processes, Djillali Liabes University} 
	\centerline{P.O. Box 89, Sidi-Bel-Abbes 22000, Algeria}
	\centerline{Email: ariouifatimazahra@gmail.com}
} 

\begin{abstract}
The motivation of the present study is to discuss the global (in time) existence of small data solutions to the following semi-linear structurally damped $\sigma$-evolution models:
\begin{equation*}
\partial_{tt}u+(-\Delta)^{\sigma}u+(-\Delta)^{\sigma/2}\partial_{t}u=\left|u\right| ^{p}, \ \sigma\geq 1, \ \ p>1,
\end{equation*} where the Cauchy data $(u(0,x), \partial_{t}u(0,x))$ will be chosen from energy space on the base of $L^{q}$ with different additional $L^{m}$ regularity, namely
\begin{equation*}
u(0,x)\in H^{\sigma,q}(\mathbb{R}^{n})\cap L^{m_{1}}(\mathbb{R}^{n}) , \ \ \partial_{t}u(0,x)\in L^{q}(\mathbb{R}^{n})\cap L^{m_{2}}(\mathbb{R}^{n}), \ \ q\in(1,\infty),\ \ m_{1}, m_{2}\in [1,q).
\end{equation*}
Our new results will show that the critical exponent which guarantees the global (in time) existence is really affected by these different additional regularities and will take \textit{two different values} under some restrictions on $m_{1}, m_{2}$, $q$, $\sigma$ and the space dimension $n\geq1$. Moreover, in each case, we have no loss of decay estimates of the unique solution with respect to the corresponding linear models.
\end{abstract}
\numberwithin{equation}{section}
\newtheorem{theorem}{Theorem}[section]
\newtheorem{lemma}[theorem]{Lemma}
\newtheorem{definition}[theorem]{Definition}
\newtheorem{remark}[theorem]{Remark}
\newtheorem{proposition}[theorem]{Proposition}
\allowdisplaybreaks		
\section{Introduction}
In this paper, we consider the following semi-linear structurally damped $\sigma$-evolution models: 
\begin{equation}\label{1.1}
\partial_{tt}u(t,x)+(-\Delta)^{\sigma}u(t,x)+(-\Delta)^{\sigma/2}\partial_{t}u(t,x)=\left|u(t,x)\right| ^{p},
\end{equation}
where the Cauchy data are from the space $H^{\sigma,q}(\mathbb{R}^{n}) \times L^{q}(\mathbb{R}^{n})$ with different additional $L^{m}$ regularity:
\begin{equation}\label{1.2}
u(0,x)\in H^{\sigma,q}(\mathbb{R}^{n})\cap L^{m_{1}}(\mathbb{R}^{n}) , \ \ \partial_{t}u(0,x)\in L^{q}(\mathbb{R}^{n})\cap L^{m_{2}}(\mathbb{R}^{n}),
\end{equation}
with the parameters:
$$t\in[0,\infty),\ x\in\mathbb{R}^{n},\ n\geq 1, \ \sigma \in [1,\infty),\  q\in(1,\infty), \  m_{1}, m_{2}\in [1,q), \ p\in(1,\infty).$$

At first, the presence of the Laplacian operators with fractional orders in the above models make them describe many practical applications in the real world, this is due to the fact that the integral representation of the fractional Laplacian operator that contains a singular kernel  can represent the memory or hereditary process, we recall the following example when $\sigma=1$ in (\ref{1.1}), the obtained model without non-linearity is frequently used in the determination of lifespan for primary or rechargeable batteries, for more details one can see \cite{fanglureissig} and references therein.

In recent years, after the pioneering work \cite{fujita} of Hiroshi Fujita in 1966, many mathematicians focus their research works on finding or improving the so-called critical exponent $p_{c}$ of some nonlinear Cauchy problems. In general, this critical exponent $p_{c}$ is exactly a threshold that divides the range of $p\in(1,\infty)$ into two parts such that when:
\begin{itemize}
	\item $1 < p <p_{c}$ there exist arbitrarily small Cauchy data, such that there exists no global (in time) weak solution, only local existence can be proved.
	\item $p_{c}<p<\infty$ then there exist global (in time) small data Sobolev or energy solutions.
\end{itemize} 

Sometimes, the critical case $p=p_{c}$ belongs to the sets of global existence or blow-up. Moreover, the well-known power nonlinearity $|u|^{p}$ is one of some types of nonlinearities such that when $p \in (p_{c}, \infty)$, the decay estimates for the unique solution are exactly coincide with those for solutions to the linear problem.

Here, let us recall the derived critical exponent for the problem (\ref{1.1}) with Cauchy data in the energy space $H^{\sigma}(\mathbb{R}^{n}) \times L^{2}(\mathbb{R}^{n})$ with the \textit{same} additional $L^{m}$ regularity, that is (\ref{1.2}) with $q=2$ and $m_{1}=m_{2}=m \in[1,2)$, the authors in \cite{phamkainanereissig, d'Abbiccoebert, daothesis, daoreissigblowup} found the following  expression:
\begin{equation}\label{1.3}
p_{c}(n,m,\sigma)=1+\frac{2m\sigma}{n-m\sigma}, \ m\in[1,2).
\end{equation}
 
We note that the idea of choosing the Cauchy data from the energy space with the same additional $L^{m}$ regularity has its origin in the work \cite{ikehataohta}, where the authors studied the Cauchy problem for semi-linear dissipative wave equations:
$$w_{tt}-\Delta w+w_{t}=|w|^{p-1}w,$$ and Cauchy data as in (\ref{1.2}) with $\sigma=1$, $q=2$, $m_{1}=m_{2}=m\in[1,2]$, they first showed the direct influence of the parameter $m$ on the Fujita exponent $p_{F}(n)=1+2/n$, more precisely, they found the following modified Fujita exponent $p_{F}(n/m)=1+2m/n$.

From the above results, one can easily see that when the Cauchy data have a unified additional regularity, then one can expect at least one critical exponent, for more results about the equation (\ref{1.1}) see \cite{daothesis, daoreissigl1estimates, daoreissiganapplication}. So, it is interesting to study the models (\ref{1.1}) with a generalized class of Cauchy data in (\ref{1.2}).

Our essential goal is to show a new influence of different additional $L^{m}$ regularity on the critical exponents (\ref{1.3}), to do this, we would like to prove the global (in time) existence of small data solutions to (\ref{1.1})-(\ref{1.2}). By standard tools such as the Banach fixed point theorem, Gagliardo-Nirenberg inequality and some mixed $(L^{m}\cap L^{q})-L^{q}$ linear estimates, we find the following form of \textit{two different values}:
$$p_{1}(m_{2}, n, \sigma)=1+\frac{2m_{2}\sigma}{n-m_{2}\sigma}, \ \ \ \ \  p_{2}(m_{1}, m_{2}, n, \sigma)=\frac{m_{1}}{m_{2}}+\frac{m_{1}\sigma}{n},$$
under some restrictions on $m_{1}$, $m_{2}$, $q$,  $\sigma$ and $n$ provided in (\ref{2.6})-(\ref{2.7}). 

The paper is structured as follows: In Section \ref{Main tools} we introduce the tools that will be used, especially, the linear estimates which are a powerful tool. Section \ref{global existence results} contains our main results of global existence and their proofs. We end this paper with a conclusion and an open question. 
\section{Preliminaries
}\label{Main tools}
First, we set some notations that will be used here and in the following.
\subsection{Notations} 
\begin{itemize}
	\item For any vector $x=(x_{1},\cdots x_{n})\in\mathbb{R}^{n}$, we denote
	$|x|^{2}=(x_{1}^{2}+\cdots x_{n}^{2}).$
	\item $L^{m}(\mathbb{R}^{n})$ denote the usual Lebesgue spaces:
	$$f\in L^{m}(\mathbb{R}^{n}) \ \text{if}\ \   \|f\|^{m}_{L^{m}}=\int_{\mathbb{R}^{n}}^{}|f(x)|^{m}dx<\infty, \ m\in [1,q].$$
	\item As usual, $H^{\sigma, q}(\mathbb{R}^{n})$ denote the Sobolev spaces on the $L^{q}$ basis (see \cite{ebertreissig} p 445):
	$$f\in H^{\sigma,q}(\mathbb{R}^{n}) \ \text{if}\ \   \|f\|_{H^{\sigma,q}(\mathbb{R}^{n})}=\|\mathcal{F}^{-1}\left( (1+|\cdot|^{2})^{\frac{\sigma}{2}}\mathcal{F}(f)\right) \|_{L^{q}(\mathbb{R}^{n})}<\infty, \ \sigma>0.$$
	\item Throughout the present paper, any constant $c>0$ in the inequality $f(t) \leq c g(t)$ does not play any role in our study, hence we omit it and we write $f(t) \lesssim g(t)$ instead of $f(t) \leq c g(t)$. 
\end{itemize}
\subsection{Linear estimates}
Here, we begin by recalling the derived $(L^{m}\cap L^{q})-L^{q}$ estimates for solutions to the linear structurally damped $\sigma$-evolution models, that is,
\begin{equation}\label{2.1}
\begin{array}{lll}
\partial_{tt}u+(-\Delta)^{\sigma}u+(-\Delta)^{\sigma/2}\partial_{t}u=0, \ u(0,x)=u_{0}(x),\   \partial_{t}u(0,x)=u_{1}(x).
\end{array} 
\end{equation}
We have the following lemma.
\begin{lemma}[Corollary 16 \cite{phamkainanereissig}, Proposition 4.4 \cite{d'Abbiccoebert}]\label{LE1}
	Let $\sigma\geq 1$ in (\ref{2.1}), let $m\in [1,q)$ with $q\in(1,\infty)$. The solutions to (\ref{2.1}) satisfy the $(L^{m}\cap L^{q})-L^{q}$ estimates:
	\begin{align}
	\|u(t,\cdot)\|_{L^{q}(\mathbb{R}^{n})}&\lesssim(1+t)^{-\frac{n}{\sigma}\left(\frac{1}{m}-\frac{1}{q}\right)} \left\|u_{0}\right\| _{L^{m}(\mathbb{R}^{n})\cap L^{q}(\mathbb{R}^{n})}+(1+t)^{-\left( \frac{n}{\sigma}\left(\frac{1}{m}-\frac{1}{q}\right)-1\right) } \left\|u_{1}\right\| _{L^{m}(\mathbb{R}^{n})\cap L^{q}(\mathbb{R}^{n})},\label{2.2}
	\end{align}
	\begin{align*}
	\left\| \left( (-\Delta)^{\sigma
		/2}u, \partial_{t}u\right) (t,\cdot)\right\| _{L^{q}(\mathbb{R}^{n})}&\ \lesssim(1+t)^{-\frac{n}{\sigma}\left(\frac{1}{m}-\frac{1}{q}\right)-1} \left\|u_{0}\right\| _{L^{m}(\mathbb{R}^{n})\cap H^{\sigma, q}(\mathbb{R}^{n})}	\hspace{4cm}
	\end{align*}
	\begin{align}
	\hspace{7cm}+(1+t)^{-\frac{n}{\sigma}\left(\frac{1}{m}-\frac{1}{q}\right)} \left\|u_{1}\right\| _{L^{m}(\mathbb{R}^{n})\cap L^{q}(\mathbb{R}^{n})}, \label{2.3}
	\end{align}
	and the $L^{q}-L^{q}$ estimates:
	\begin{equation}
	\|u(t,\cdot)\|_{L^{q}(\mathbb{R}^{n})} \lesssim\|u_{0}\|_{L^{q}(\mathbb{R}^{n})}+(1+t)\|u_{1}\|_{L^{q}(\mathbb{R}^{n})}, \label{2.4}
	\end{equation}
	\begin{equation}
	\left\| \left( (-\Delta)^{\sigma/2}u, \partial_{t}u\right)(t,\cdot)\right\| _{L^{q}(\mathbb{R}^{n})} \lesssim(1+t)^{-1}\|u_{0}\|_{H^{\sigma,q}(\mathbb{R}^{n})}+\|u_{1}\|_{L^{q}(\mathbb{R}^{n})}, \label{2.5}
	\end{equation}
\end{lemma}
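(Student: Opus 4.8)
The plan is to reduce the linear problem \eqref{2.1} to a family of ordinary differential equations by means of the partial Fourier transform with respect to $x$. Applying $\mathcal{F}$ to \eqref{2.1} gives, with $\widehat{u}(t,\xi)=\mathcal{F}(u(t,\cdot))(\xi)$,
\begin{equation*}
\partial_{tt}\widehat{u}+|\xi|^{2\sigma}\widehat{u}+|\xi|^{\sigma}\partial_{t}\widehat{u}=0,\qquad \widehat{u}(0,\xi)=\widehat{u_{0}}(\xi),\quad \partial_{t}\widehat{u}(0,\xi)=\widehat{u_{1}}(\xi),
\end{equation*}
whose characteristic roots are $\lambda_{\pm}(\xi)=\tfrac{1}{2}\bigl(-1\pm\sqrt{3}\,i\bigr)|\xi|^{\sigma}$, so that $\operatorname{Re}\lambda_{\pm}(\xi)=-\tfrac{1}{2}|\xi|^{\sigma}$ for every $\xi\neq0$. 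This makes the solution formula uniform in the frequency:
\begin{equation*}
\widehat{u}(t,\xi)=e^{-\frac{1}{2}|\xi|^{\sigma}t}\Bigl(\cos\bigl(\tfrac{\sqrt{3}}{2}|\xi|^{\sigma}t\bigr)+\tfrac{1}{\sqrt{3}}\sin\bigl(\tfrac{\sqrt{3}}{2}|\xi|^{\sigma}t\bigr)\Bigr)\widehat{u_{0}}(\xi)+e^{-\frac{1}{2}|\xi|^{\sigma}t}\,\frac{\sin\bigl(\tfrac{\sqrt{3}}{2}|\xi|^{\sigma}t\bigr)}{\tfrac{\sqrt{3}}{2}|\xi|^{\sigma}}\,\widehat{u_{1}}(\xi).
\end{equation*}
Hence $u$, $(-\Delta)^{\sigma/2}u$ and $\partial_{t}u$ are all obtained by applying to $u_{0}$ and $u_{1}$ Fourier multipliers of the schematic form $|\xi|^{j\sigma}e^{-\frac{1}{2}|\xi|^{\sigma}t}\times(\text{bounded oscillating factor})$ with $j\in\{0,1\}$; the power $|\xi|^{j\sigma}$ occurs with $j=1$ exactly for the $u_{0}$-entries of $(-\Delta)^{\sigma/2}u$ and $\partial_{t}u$, which is the origin of the $H^{\sigma,q}$-norms on the right-hand sides of \eqref{2.3} and \eqref{2.5}.

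I would then split the frequency space by a smooth partition $1=\chi_{\mathrm{low}}(\xi)+\chi_{\mathrm{high}}(\xi)$ with $\chi_{\mathrm{low}}$ supported in $\{|\xi|\leq1\}$ and $\chi_{\mathrm{high}}$ in $\{|\xi|\geq1/2\}$, and estimate each multiplier on the two zones. On the low-frequency zone the oscillations are harmless because $|\sin(c|\xi|^{\sigma}t)|\leq\min\{1,c|\xi|^{\sigma}t\}$, so up to constants the relevant convolution kernel is $\mathcal{F}^{-1}\bigl(\chi_{\mathrm{low}}(\xi)|\xi|^{j\sigma}e^{-\frac{1}{2}|\xi|^{\sigma}t}\bigr)$, whose $L^{r}(\mathbb{R}^{n})$-norm is $\lesssim(1+t)^{-\frac{n}{\sigma}\left(1-\frac{1}{r}\right)-j}$ by the parabolic scaling $\xi\mapsto t^{-1/\sigma}\xi$. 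Young's convolution inequality with $\tfrac{1}{r}=1-\tfrac{1}{m}+\tfrac{1}{q}$ then turns the $L^{m}$-norm of the data into the factor $(1+t)^{-\frac{n}{\sigma}\left(\frac{1}{m}-\frac{1}{q}\right)-j}$; for the $u_{1}$-entry of $u$ one uses instead $|\sin(c|\xi|^{\sigma}t)|\,|\xi|^{-\sigma}\lesssim t$, which produces the shifted rate $-\bigl(\tfrac{n}{\sigma}(\tfrac{1}{m}-\tfrac{1}{q})-1\bigr)$ of \eqref{2.2}.

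On the high-frequency zone the factor $e^{-\frac{1}{2}|\xi|^{\sigma}t}$ yields, for $t\geq1$, decay faster than any prescribed power of $t$, while for $t\in[0,1]$ I would check the Mikhlin--Hörmander symbol bounds $|\partial_{\xi}^{\alpha}m(t,\xi)|\lesssim|\xi|^{-|\alpha|}$ (derivatives falling on $e^{-\frac{1}{2}|\xi|^{\sigma}t}$ only create powers of $t\,|\xi|^{\sigma-1}$, bounded for $|\xi|\geq1/2$ and $t\leq1$) and invoke the $L^{q}$-boundedness of such multipliers, which is exactly where the assumption $q\in(1,\infty)$ enters; the extra factor $|\xi|^{\sigma}$ in the $u_{0}$-entries of $(-\Delta)^{\sigma/2}u$ and $\partial_{t}u$ is absorbed into $\|u_{0}\|_{H^{\sigma,q}}$. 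Since $L^{m}$-data are used only through the low-zone parabolic kernel, adding the two zones yields the mixed estimates \eqref{2.2}--\eqref{2.3}, and keeping only the uniform $L^{q}$-multiplier bounds (discarding the low-zone gain) yields \eqref{2.4}--\eqref{2.5}. I expect the main obstacle to be the uniform-in-$t$ verification of the Mikhlin-type estimates on the high zone and their matching with the low-zone estimates across $|\xi|\sim1$; the remaining steps — the parabolic scaling, the interplay between $t$ and $1+t$ near $t=0$, and the tracking of the oscillating factors — are routine. Alternatively, the statement can be quoted directly from Corollary 16 of \cite{phamkainanereissig} and Proposition 4.4 of \cite{d'Abbiccoebert}.
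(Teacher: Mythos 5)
The paper offers no proof of this lemma --- it is quoted directly from Corollary 16 of \cite{phamkainanereissig} and Proposition 4.4 of \cite{d'Abbiccoebert} --- and your sketch correctly reconstructs the standard argument used in those references: the explicit Fourier representation with roots $\lambda_{\pm}=\frac{-1\pm i\sqrt{3}}{2}|\xi|^{\sigma}$ (uniform over all frequencies, since the damping exponent $\sigma/2$ is exactly the borderline case), a low/high frequency splitting, parabolic scaling plus Young's convolution inequality on the low zone, and uniform $L^{q}$ multiplier bounds on the high zone. The only small imprecision is your parenthetical on the high zone: derivatives falling on $e^{-\frac{1}{2}|\xi|^{\sigma}t}$ or on the oscillating factors contribute, relative to the Mikhlin weight $|\xi|^{-|\alpha|}$, powers of $t|\xi|^{\sigma}$, which for $\sigma>1$ are \emph{not} bounded merely by restricting to $|\xi|\geq 1/2$ and $t\leq 1$; they are absorbed by the exponential factor itself via $t|\xi|^{\sigma}e^{-\frac{1}{2}|\xi|^{\sigma}t}\lesssim 1$, after which the uniform Mikhlin--H\"ormander bounds hold as you claim.
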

\begin{remark}
	Fortunately, since the $(L^{m}\cap L^{q})-L^{q}$ estimates (\ref{2.2})-(\ref{2.3}) are sharp, we can see from them that if $(m=q)$ the $L^{q}-L^{q}$ estimates (\ref{2.4})-(\ref{2.5}) are also sharp. This fact helps us to estimate the norms $$\|u(t,\cdot)\|_{L^{q}(\mathbb{R}^{n})} \ \text{and} \ \  \|((-\Delta)^{\sigma
		/2}u, \partial_{t}u)(t,\cdot)\|_{L^{q}(\mathbb{R}^{n})}$$ 
	when we treat the semi-linear problem, see for instance the proof of (\ref{3.11}), no loss of decay appears.
\end{remark}
Now, having in mind the data space (\ref{1.2}), we distinguish the following two cases.
\smallskip
\begin{corollary}\label{Linearestimates1}
	Let $\sigma\geq 1$ in (\ref{2.1}), let $q\in(1,\infty)$ and $m_{1}, m_{2}\in [1,q)$ with
	\begin{equation}\label{2.6}
	\left\lbrace  
	\begin{matrix}
	\frac{m_{2}q\sigma}{q-m_{2}}<n<\infty \hfill& {if }\ \ m_{1}\leq m_{2}, 
	&\cr
	\\
	\frac{m_{2}q\sigma}{q-m_{2}}<n\leq \frac{m_{1}m_{2}\sigma}{m_{1}-m_{2}} \hfill & {if } \ \ m_{2}<m_{1}. 
	&\cr
	\end{matrix}\right.  
	\end{equation} 
	Then, the solutions to (\ref{2.1}) satisfy the following estimates:
	\begin{align*}
	\|u(t,\cdot)\|_{L^{q}(\mathbb{R}^{n})}\lesssim(1+t)^{1-\frac{n}{\sigma}\left(\frac{1}{m_{2}}-\frac{1}{q}\right)}\left(  \left\|u_{0}\right\| _{L^{m_{1}}(\mathbb{R}^{n})\cap L^{q}(\mathbb{R}^{n})}+ \left\|u_{1}\right\| _{L^{m_{2}}(\mathbb{R}^{n})\cap L^{q}(\mathbb{R}^{n})}\right),
	\end{align*}
	\begin{equation*}
	\begin{split}
	\left\| ((-\Delta)^{\sigma
		/2}u,\partial_{t}u)(t,\cdot)\right\| _{L^{q}(\mathbb{R}^{n})} \lesssim& (1+t)^{-\frac{n}{\sigma}\left(\frac{1}{m_{2}}-\frac{1}{q}\right)}\Big(  \left\|u_{0}\right\| _{L^{m_{1}}(\mathbb{R}^{n})\cap H^{\sigma,q}(\mathbb{R}^{n})} \left\|u_{1}\right\| _{L^{m_{2}}(\mathbb{R}^{n})\cap L^{q}(\mathbb{R}^{n})}\Big).
	\end{split}
	\end{equation*}
\end{corollary}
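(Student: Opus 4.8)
The plan is to derive Corollary \ref{Linearestimates1} directly from Lemma \ref{LE1} by choosing the parameter $m$ in the mixed estimates \eqref{2.2}--\eqref{2.3} to match the additional regularity of each datum, and then reconciling the two resulting decay rates under the dimension restrictions \eqref{2.6}. The key observation is that $u_0$ carries $L^{m_1}$ regularity while $u_1$ carries $L^{m_2}$ regularity, so one cannot apply \eqref{2.2}--\eqref{2.3} with a single common $m$; instead one splits the linear solution as $u(t,\cdot)=E_0(t,\cdot)\ast u_0 + E_1(t,\cdot)\ast u_1$, estimates the first summand using \eqref{2.2} (resp.\ \eqref{2.3}) with $m=m_1$ and the second summand using the same estimates with $m=m_2$.

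First I would treat the $\|u(t,\cdot)\|_{L^q}$ estimate. Applying \eqref{2.2} with $m=m_1$ to the $u_0$-part gives a bound $(1+t)^{-\frac{n}{\sigma}(\frac{1}{m_1}-\frac{1}{q})}\|u_0\|_{L^{m_1}\cap L^q}$, and applying \eqref{2.2} with $m=m_2$ to the $u_1$-part gives $(1+t)^{-(\frac{n}{\sigma}(\frac{1}{m_2}-\frac{1}{q})-1)}\|u_1\|_{L^{m_2}\cap L^q}$. The claimed estimate has the single rate $(1+t)^{1-\frac{n}{\sigma}(\frac{1}{m_2}-\frac{1}{q})}$, so I must check this is the slower of the two decay rates, i.e.\ that $\frac{n}{\sigma}(\frac{1}{m_1}-\frac{1}{q}) \geq \frac{n}{\sigma}(\frac{1}{m_2}-\frac{1}{q})-1$, equivalently $\frac{n}{\sigma}(\frac{1}{m_2}-\frac{1}{m_1}) \leq 1$. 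When $m_1 \leq m_2$ this holds automatically since the left side is $\leq 0$; when $m_2 < m_1$ it is exactly the second constraint $n \leq \frac{m_1 m_2 \sigma}{m_1-m_2}$ in \eqref{2.6}. In either case the $u_0$-part decays at least as fast as the $u_1$-part, and since $t\geq 0$ we may dominate both by the common weight $(1+t)^{1-\frac{n}{\sigma}(\frac{1}{m_2}-\frac{1}{q})}$, absorbing constants. The same argument applied to \eqref{2.3} yields the second displayed estimate: the $u_0$-part has rate $-\frac{n}{\sigma}(\frac{1}{m_1}-\frac{1}{q})-1$ and the $u_1$-part has rate $-\frac{n}{\sigma}(\frac{1}{m_2}-\frac{1}{q})$, and the same inequality $\frac{n}{\sigma}(\frac{1}{m_2}-\frac{1}{m_1})\leq 1$ guarantees the $u_1$-rate dominates, giving the stated common weight $(1+t)^{-\frac{n}{\sigma}(\frac{1}{m_2}-\frac{1}{q})}$.

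The role of the first constraint $n > \frac{m_2 q\sigma}{q-m_2}$ in \eqref{2.6}, equivalently $\frac{n}{\sigma}(\frac{1}{m_2}-\frac{1}{q}) > 1$, is to ensure the exponent $1-\frac{n}{\sigma}(\frac{1}{m_2}-\frac{1}{q})$ is negative so that $\|u(t,\cdot)\|_{L^q}$ genuinely decays; it does not enter the derivation of the inequalities themselves but is recorded here because it will be needed when these estimates are fed into the fixed-point scheme in Section \ref{global existence results}. The main (and only real) obstacle is the bookkeeping of which decay exponent is slower in the case $m_2 < m_1$; everything else is a direct substitution into Lemma \ref{LE1} together with the elementary fact that for $a \geq b$ and $t \geq 0$ one has $(1+t)^{-a} \leq (1+t)^{-b}$. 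I would therefore organize the proof as: (i) decompose the solution by data; (ii) apply \eqref{2.2} and \eqref{2.3} with $m=m_1$ and $m=m_2$ on the respective pieces; (iii) verify the exponent comparison using \eqref{2.6}; (iv) collect terms under the slower weight.
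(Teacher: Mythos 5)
Your proposal is correct and is essentially the paper's own argument: the paper simply states that the Corollaries "can be directly obtained from Lemma \ref{LE1}," and your write-up (split the linear solution by datum, apply \eqref{2.2}--\eqref{2.3} with $m=m_1$ on the $u_0$-part and $m=m_2$ on the $u_1$-part, then check via \eqref{2.6} that $\frac{n}{\sigma}\bigl(\frac{1}{m_2}-\frac{1}{m_1}\bigr)\leq 1$ so the $u_1$-rate is the slower one) supplies exactly the omitted bookkeeping. Your observation that the lower bound $n>\frac{m_2 q\sigma}{q-m_2}$ is not needed for the inequalities themselves but only to make the rate a genuine decay also matches the paper's own remark on this point.
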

\begin{corollary}\label{Linearestimates2}
	Let $\sigma\geq 1$ in (\ref{2.1}), let $q\in(1,\infty)$ and $m_{1}, m_{2}\in [1,q)$ with
	\begin{equation}\label{2.7}
	m_{2}<m_{1} \ \ \text{and}\ \ \ \frac{m_{1}m_{2}\sigma}{m_{1}-m_{2}}\leq n.
	\end{equation} 
	Then, the solutions to (\ref{2.1}) satisfy the following estimates:
	\begin{align*}
	\|u(t,\cdot)\|_{L^{q}(\mathbb{R}^{n})}\lesssim(1+t)^{-\frac{n}{\sigma}\left(\frac{1}{m_{1}}-\frac{1}{q}\right)}\left(  \left\|u_{0}\right\| _{L^{m_{1}}(\mathbb{R}^{n})\cap L^{q}(\mathbb{R}^{n})}+ \left\|u_{1}\right\| _{L^{m_{2}}(\mathbb{R}^{n})\cap L^{q}(\mathbb{R}^{n})}\right),
	\end{align*}
	\begin{equation*}
	\begin{split}
	\left\| ((-\Delta)^{\sigma
		/2}u,\partial_{t}u)(t,\cdot)\right\| _{L^{q}(\mathbb{R}^{n})} \lesssim& (1+t)^{-\frac{n}{\sigma}\left(\frac{1}{m_{1}}-\frac{1}{q}\right)-1}\Big(  \left\|u_{0}\right\| _{L^{m_{1}}(\mathbb{R}^{n})\cap H^{\sigma,q}(\mathbb{R}^{n})}+ \left\|u_{1}\right\| _{L^{m_{2}}(\mathbb{R}^{n})\cap L^{q}(\mathbb{R}^{n})}\Big).
	\end{split}
	\end{equation*}
\end{corollary}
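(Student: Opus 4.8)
The plan is to reduce everything to the single–exponent estimates of Lemma \ref{LE1} by using the linearity of (\ref{2.1}) and by choosing the parameter $m$ \emph{differently} for the two pieces of the Cauchy data. Since (\ref{2.1}) is linear, write its solution as $u=v+w$, where $v$ solves (\ref{2.1}) with data $(u_0,0)$ and $w$ solves (\ref{2.1}) with data $(0,u_1)$. The datum $u_0$ belongs to $L^{m_1}\cap L^{q}$ (resp.\ $L^{m_1}\cap H^{\sigma,q}$), so for $v$ I would invoke (\ref{2.2}) (resp.\ the $u_0$–term of (\ref{2.3})) with $m=m_1$; the datum $u_1$ belongs to $L^{m_2}\cap L^{q}$, so for $w$ I would invoke the $u_1$–term of (\ref{2.2}) (resp.\ of (\ref{2.3})) with $m=m_2$.

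Carried out, this gives immediately $\|v(t,\cdot)\|_{L^q}\lesssim (1+t)^{-\frac n\sigma(\frac1{m_1}-\frac1q)}\|u_0\|_{L^{m_1}\cap L^q}$ and $\|((-\Delta)^{\sigma/2}v,\partial_t v)(t,\cdot)\|_{L^q}\lesssim (1+t)^{-\frac n\sigma(\frac1{m_1}-\frac1q)-1}\|u_0\|_{L^{m_1}\cap H^{\sigma,q}}$, i.e.\ exactly the asserted $v$–contributions. For $w$, Lemma \ref{LE1} with $m=m_2$ yields $\|w(t,\cdot)\|_{L^q}\lesssim(1+t)^{-(\frac n\sigma(\frac1{m_2}-\frac1q)-1)}\|u_1\|_{L^{m_2}\cap L^q}$ and $\|((-\Delta)^{\sigma/2}w,\partial_t w)(t,\cdot)\|_{L^q}\lesssim(1+t)^{-\frac n\sigma(\frac1{m_2}-\frac1q)}\|u_1\|_{L^{m_2}\cap L^q}$. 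The crux is then to absorb these $w$–rates into the (slower) $v$–rates. Because $1+t\ge 1$ for $t\ge0$, the map $\alpha\mapsto(1+t)^{\alpha}$ is nondecreasing, so it suffices to compare exponents: for the $L^q$–estimate one needs $-\bigl(\tfrac n\sigma(\tfrac1{m_2}-\tfrac1q)-1\bigr)\le -\tfrac n\sigma(\tfrac1{m_1}-\tfrac1q)$, and for the energy estimate $-\tfrac n\sigma(\tfrac1{m_2}-\tfrac1q)\le -\tfrac n\sigma(\tfrac1{m_1}-\tfrac1q)-1$. Both collapse to the one inequality $\tfrac n\sigma\bigl(\tfrac1{m_2}-\tfrac1{m_1}\bigr)\ge 1$; since $m_2<m_1$ gives $\tfrac1{m_2}-\tfrac1{m_1}=\tfrac{m_1-m_2}{m_1m_2}>0$, this is equivalent to $\tfrac{m_1m_2\sigma}{m_1-m_2}\le n$, which is precisely hypothesis (\ref{2.7}). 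Combining the $v$– and $w$–contributions by the triangle inequality then produces the stated estimates.

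I do not expect a genuine obstacle here: the substance is the exponent bookkeeping and the observation that, in this parameter regime, it is the $u_0$–driven part (measured through $m_1$) that decays the more slowly, whereas in the regime of Corollary \ref{Linearestimates1} it is the $u_1$–driven part that dominates — the threshold $n=\tfrac{m_1m_2\sigma}{m_1-m_2}$ being exactly where the two behaviours cross. The only points deserving care are to verify that (\ref{2.7}) is \emph{equivalent} to, not merely sufficient for, the required exponent inequality (so that the stated range of parameters is the natural one), and to note that $m_1<q$ ensures $\tfrac1{m_1}-\tfrac1q>0$, so that the resulting powers of $(1+t)$ are genuine decay rates; no regularity of the data beyond what is already recorded in (\ref{2.2})–(\ref{2.3}) is needed.
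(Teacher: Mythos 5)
Your proposal is correct and matches the paper's intended argument: the paper simply states that both corollaries "can be directly obtained from Lemma \ref{LE1}," and the natural way to do so is exactly your route --- split $u$ by linearity, apply the lemma with $m=m_1$ to the $u_0$-part and $m=m_2$ to the $u_1$-part, and observe that the condition $\frac{n}{\sigma}\bigl(\frac{1}{m_2}-\frac{1}{m_1}\bigr)\ge 1$, which is equivalent to (\ref{2.7}), is precisely what lets the $u_1$-driven rates be absorbed into the slower $u_0$-driven rates. Your exponent bookkeeping checks out in both estimates, so there is nothing to add.
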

The proof of these Corollaries can be directly obtained from the above Lemma (\ref{LE1}). Another important additional tool is the fractional Gagliardo-Nirenberg inequality (see \cite{ebertreissig}).
\begin{lemma}\cite{ebertreissig}\label{FGN}
	Let $1<q<\infty$, $\sigma>0$ and $s\in[0,\sigma)$. Then, the following fractional Gagliardo-Nirenberg inequality holds for all $y\in H^{\sigma}(\mathbb{R}^{n})$
	$$\|(-\Delta)^{s/2}y\|_{L^{q_{1}}(\mathbb{R}^{n})}\leq \|(-\Delta)^{\sigma/2}y\|_{L^{q_{2}}(\mathbb{R}^{n})}^{\theta}\,\|y\|_{L^{q_{2}}(\mathbb{R}^{n})}^{1-\theta},$$
	where $$\theta=\frac{n}{\sigma}\left(\frac{1}{q_{2}}-\frac{1}{q_{1}}+\frac{s}{n} \right)\in\left[ \frac{s}{\sigma},1\right].$$
\end{lemma}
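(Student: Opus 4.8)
The plan is to separate the parameter range into the two boundary cases $\theta=1$ and $\theta=s/\sigma$, where the inequality collapses to a single classical fact, and to recover the interior range $s/\sigma<\theta<1$ by a Littlewood--Paley decomposition followed by an optimization of the dyadic cut-off. By a standard density argument it suffices to establish the estimate for $y$ in the Schwartz class, so that all the frequency series below converge; the general case then follows since the bound applied to differences shows the map $y\mapsto(-\Delta)^{s/2}y$ is Cauchy in the $L^{q_{1}}$-norm along approximating sequences. Fix a Littlewood--Paley family $\{\Delta_{j}\}_{j\geq0}$ together with the low-frequency projection $S_{0}$, and set $a:=s+n\big(\tfrac{1}{q_{2}}-\tfrac{1}{q_{1}}\big)$, so that $a=\sigma\theta$; note that the stated range $s/\sigma\leq\theta\leq1$ is exactly equivalent to the two inequalities $q_{1}\geq q_{2}$ and $0\leq a\leq\sigma$, both of which I will use repeatedly.

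For the interior case $0<\theta<1$, i.e.\ $0<a<\sigma$, I would use two ingredients. (i) The Bernstein inequality: since $q_{2}\leq q_{1}$, one has $\|(-\Delta)^{s/2}\Delta_{j}y\|_{L^{q_{1}}(\mathbb{R}^{n})}\lesssim 2^{js}\,2^{\,jn(\frac{1}{q_{2}}-\frac{1}{q_{1}})}\,\|\Delta_{j}y\|_{L^{q_{2}}(\mathbb{R}^{n})}=2^{ja}\|\Delta_{j}y\|_{L^{q_{2}}(\mathbb{R}^{n})}$, obtained by rescaling and Young's inequality for the convolution kernel of the (rescaled) smooth, compactly supported symbol $|\xi|^{s}\varphi(\xi)$. (ii) The one-sided bounds $\|\Delta_{j}y\|_{L^{q_{2}}(\mathbb{R}^{n})}\lesssim\|y\|_{L^{q_{2}}(\mathbb{R}^{n})}$ and $\|\Delta_{j}y\|_{L^{q_{2}}(\mathbb{R}^{n})}\lesssim 2^{-j\sigma}\|(-\Delta)^{\sigma/2}y\|_{L^{q_{2}}(\mathbb{R}^{n})}$, which follow from the $L^{q_{2}}$-boundedness of $\Delta_{j}$ and of the multiplier $2^{j\sigma}|\xi|^{-\sigma}\varphi(2^{-j}\xi)$ via the Mihlin--Hörmander theorem, using $1<q_{2}<\infty$. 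Splitting the frequencies at a level $2^{N}$, applying (i)--(ii) block by block, and summing the two resulting geometric series (the low-frequency piece $S_{0}y$ being of lower order because $a>0$) yields
$$\|(-\Delta)^{s/2}y\|_{L^{q_{1}}(\mathbb{R}^{n})}\ \lesssim\ 2^{Na}\,\|y\|_{L^{q_{2}}(\mathbb{R}^{n})}+2^{N(a-\sigma)}\,\|(-\Delta)^{\sigma/2}y\|_{L^{q_{2}}(\mathbb{R}^{n})},$$
and choosing $2^{N\sigma}\sim\|(-\Delta)^{\sigma/2}y\|_{L^{q_{2}}}/\|y\|_{L^{q_{2}}}$ to balance the two terms produces precisely the powers $\theta=a/\sigma$ and $1-\theta$.

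It remains to handle the two boundaries, where the geometric sums above degenerate. When $\theta=1$, i.e.\ $\tfrac{1}{q_{1}}=\tfrac{1}{q_{2}}-\tfrac{\sigma-s}{n}$, I would instead invoke directly the Hardy--Littlewood--Sobolev/fractional Sobolev embedding $(-\Delta)^{-(\sigma-s)/2}\colon L^{q_{2}}(\mathbb{R}^{n})\to L^{q_{1}}(\mathbb{R}^{n})$ applied to $(-\Delta)^{\sigma/2}y$; note that this case forces $q_{1}>q_{2}$. When $\theta=s/\sigma$ one necessarily has $q_{1}=q_{2}=:q$ and the claim is the fixed-exponent interpolation inequality $\|(-\Delta)^{s/2}y\|_{L^{q}}\lesssim\|(-\Delta)^{\sigma/2}y\|_{L^{q}}^{s/\sigma}\|y\|_{L^{q}}^{1-s/\sigma}$, which is the same dyadic split-and-balance argument with $q_{1}=q_{2}$ (or may be quoted as interpolation of Riesz potentials). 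In fact the whole lemma can be organized in this spirit in two steps---first lower $q_{1}$ to $q_{2}$ by the Hardy--Littlewood--Sobolev embedding, gaining $n(\tfrac{1}{q_{2}}-\tfrac{1}{q_{1}})$ orders of smoothness, then interpolate in the fixed scale $L^{q_{2}}$---which avoids the cut-off optimization at the price of the same endpoint inputs. The main obstacle is therefore not the bookkeeping above but the underlying harmonic analysis: the $L^{q}$-boundedness for $q\neq2$ of the Littlewood--Paley projections, of the truncated Riesz potentials, and of the Hardy--Littlewood--Sobolev operator, all of which rest on the Mihlin--Hörmander multiplier theorem and on Young's inequality. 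Since the statement is quoted verbatim from \cite{ebertreissig}, in practice I would simply refer to that source.
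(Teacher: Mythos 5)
The paper gives no proof of this lemma: it is quoted verbatim from the reference \cite{ebertreissig}, so there is nothing internal to compare your argument against, and your closing remark that in practice one would simply cite that source is exactly what the author does. That said, your sketch is the standard and essentially correct route to the result (Littlewood--Paley decomposition, Bernstein's inequality giving the factor $2^{ja}$ with $a=\sigma\theta$, the two one-sided bounds on $\|\Delta_j y\|_{L^{q_2}}$ via Mihlin--H\"ormander, summation of the two geometric series and optimization of the cut-off, with Hardy--Littlewood--Sobolev at $\theta=1$ and Riesz-potential interpolation at $\theta=s/\sigma$); this is precisely how the inequality is proved in the harmonic-analysis literature the book draws on. Three small caveats. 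First, since the right-hand side involves the \emph{homogeneous} seminorm $\|(-\Delta)^{\sigma/2}y\|_{L^{q_2}}$, the clean version of your argument uses the homogeneous decomposition $\sum_{j\in\mathbb{Z}}\dot\Delta_j$; with your inhomogeneous family the low-frequency block $S_0y$ is controlled by $\|y\|_{L^{q_2}}$, not by $2^{Na}\|y\|_{L^{q_2}}$ when the optimal $N$ is negative, so you should either normalize $\|(-\Delta)^{\sigma/2}y\|_{L^{q_2}}=\|y\|_{L^{q_2}}$ by scaling (the inequality is scale-invariant) or switch to the homogeneous decomposition. Second, your argument yields the inequality with an unspecified constant $C(n,s,\sigma,q_1,q_2)$, whereas the lemma as printed has constant $1$; that is a defect of the paper's transcription (it should read $\lesssim$), not of your proof. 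Third, at the endpoint $\theta=1$ you need $q_1<\infty$ for Hardy--Littlewood--Sobolev, and throughout you need $q_1\ge q_2$ for Bernstein --- both are equivalent to the stated constraint $\theta\in[s/\sigma,1]$ as you correctly observe, but $q_1$ is not quantified in the lemma's statement, so it is worth recording these restrictions explicitly.
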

Finally, we need the following integral inequality to deal with the Duhamel's principle, see for instence the proofs of (\ref{3.12})-(\ref{3.13}).
\begin{lemma}\cite{ebertreissig}\label{Integral inequality}
	Let $a, b\in\mathbb{R}$. Then, it holds
	$$\int_{0}^{t}(1+t-s)^{-a}(1+s)^{-b}ds\leq\left\{ 
	\begin{matrix}
	C(1+t)^{-\min\{a,b\}} \hfill& {if }\ \ \max\{a,b\}>1,&\cr C(1+t)^{-\min\{a,b\}}\log(2+t) & {if } \ \
	\max\{a,b\}=1,&\cr
	C(1+t)^{1-a-b}\hfill& {if }\ \ \max\{a,b\}<1.
	\end{matrix}\right.$$
	
\end{lemma}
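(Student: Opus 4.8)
The plan is to bound this convolution-type integral by the standard device of splitting the time interval at its midpoint, on each half freezing whichever of the two factors is then comparable to $(1+t)$. First I would record a symmetry that halves the work: the substitution $s\mapsto t-s$ gives $\int_0^t(1+t-s)^{-a}(1+s)^{-b}\,ds=\int_0^t(1+s)^{-a}(1+t-s)^{-b}\,ds$, and since the asserted upper bound is itself symmetric in $a$ and $b$ (it involves only $\min\{a,b\}$, $\max\{a,b\}$ and $a+b$), it suffices to estimate $I_1:=\int_0^{t/2}(1+t-s)^{-a}(1+s)^{-b}\,ds$ carefully and then read off the bound for $I_2:=\int_{t/2}^t(\cdot)\,ds$ by swapping $a$ and $b$.

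For $I_1$ we have $1+t-s\in[1+\tfrac t2,\,1+t]$, so irrespective of the sign of $a$ one gets $(1+t-s)^{-a}\lesssim(1+t)^{-a}$ (use $1+\tfrac t2\ge\tfrac12(1+t)$ when $a\ge0$, and $1+t-s\le1+t$ when $a<0$). Pulling this factor out leaves the elementary one-dimensional integral $\int_0^{t/2}(1+s)^{-b}\,ds$, which is $\lesssim1$ if $b>1$, $\lesssim\log(2+t)$ if $b=1$, and $\lesssim(1+t)^{1-b}$ if $b<1$. Hence $I_1\lesssim(1+t)^{-a}$, or $I_1\lesssim(1+t)^{-a}\log(2+t)$, or $I_1\lesssim(1+t)^{1-a-b}$ in the three respective cases, and by the symmetry above $I_2\lesssim(1+t)^{-b}$, or $I_2\lesssim(1+t)^{-b}\log(2+t)$, or $I_2\lesssim(1+t)^{1-a-b}$ according as $a>1$, $a=1$, or $a<1$.

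It then remains to assemble $I_1+I_2$ according to where $\max\{a,b\}$ lies relative to $1$; assume $a\ge b$ without loss of generality, so $\min\{a,b\}=b$. If $\max\{a,b\}=a<1$, both halves are already $\lesssim(1+t)^{1-a-b}$ and we are done. If $a>1$, I would check case by case in $b$ that the $I_1$-bound is $\lesssim(1+t)^{-b}$ — the key observation being that the strictly decaying factor $(1+t)^{1-a}$ absorbs any logarithm and any excess power — while $I_2\lesssim(1+t)^{-b}=(1+t)^{-\min\{a,b\}}$ directly. If $a=1$, then $I_2\lesssim(1+t)^{-b}\log(2+t)=(1+t)^{-\min\{a,b\}}\log(2+t)$ and the $I_1$-contribution is no larger, which is exactly the borderline bound claimed. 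There is no serious analytic obstacle here; the only points that need care are the sign bookkeeping when freezing a factor carrying a possibly negative exponent, and the combinatorics of the borderline case $\max\{a,b\}=1$, and these are the steps I would write out in full.
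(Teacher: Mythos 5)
Your proof is correct. The paper itself gives no proof of this lemma---it is quoted directly from the book of Ebert and Reissig---but the argument you outline (split at $s=t/2$, freeze the factor comparable to $(1+t)$ on each half with care for the sign of the exponent, reduce to $\int_0^{t/2}(1+s)^{-b}\,ds$, and use the $a\leftrightarrow b$ symmetry to handle the second half) is precisely the standard proof given in that reference, and your case analysis for assembling $I_1+I_2$ around the threshold $\max\{a,b\}=1$ checks out.
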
 
In the next section, we can state and prove our main results. By following the two cases mentioned above we have two principal results.

\section{Global existence results}
\label{global existence results}
In our theorems we will show the interactions between all the parameters $m_{1}, m_{2}$, $q$, $n$, $\sigma$ and $p$.
\begin{theorem}\label{global existence results 1}
	Let us consider the Cauchy problem (\ref{1.1})-(\ref{1.2}) with $\sigma\geq1$,  $p>1$ and $q\in(1,\infty)$, $m_{1}, m_{2} \in [1,q)$. Let us assume the following conditions  
	\begin{equation}\label{3.1}
	\left\lbrace  
	\begin{matrix}
	\frac{m_{2}q\sigma}{q-m_{2}}<n<\infty \hfill& {if }\ \ m_{1}\leq m_{2}, 
	&\cr
	\\
	\frac{m_{2}q\sigma}{q-m_{2}}<n\leq \frac{m_{1}m_{2}\sigma}{m_{1}-m_{2}} \hfill & {if } \ \ m_{2}<m_{1}. 
	&\cr
	\end{matrix}\right.  
	\end{equation} 
	Moreover, the exponent $p$ should satisfy:
	\begin{equation}\label{3.2}
	\left\lbrace  
	\begin{matrix}
	\frac{q}{m_{2}}\leq p \leq \frac{n}{n-q\sigma} \hfill & {if }\ \ q\sigma<n\leq \frac{q^{2}\sigma}{q-m_{2}}, 
	&\cr
	\\
	\frac{q}{m_{2}}\leq p <\infty \hfill & {if } \ \
	1\leq n \leq q\sigma, 
	&\cr
	\end{matrix}\right.  
	\end{equation}
	and
	\begin{equation}\label{3.3}  
	p>1+\frac{2m_{2}\sigma}{n-m_{2}\sigma}.
	\end{equation}\\
	Then, there exists a constant $\varepsilon_0>0$ such that for any data 
	$$( u(0,x),\partial_{t}u(0,x))  \in \mathcal{D}:= \left(H^{\sigma,q}(\mathbb{R}^{n})\cap L^{m_{1}}(\mathbb{R}^{n}) \right)\times\left( L^{q}(\mathbb{R}^{n})\cap L^{m_{2}}(\mathbb{R}^{n})\right)$$
	with $$\left\|( u(0,x),\partial_{t}u(0,x)) \right\|_{\mathcal{D}}<\varepsilon_0,$$ 
	we have a uniquely determined globally (in time) solution
	$$u\in\mathcal{C}^{0}\left([0,\infty), H^{\sigma,q}(\mathbb{R}^{n})\right)\cap \mathcal{C}^{1}\left([0,\infty),L^{q}(\mathbb{R}^{n})\right)$$
	to (\ref{1.1}). Furthermore, the solution satisfies the estimates:
	\begin{equation*}
	\left\|u(t,\cdot)\right\|_{L^{q}(\mathbb{R}^{n})} \lesssim (1+t)^{1-\frac{n}{\sigma}\left(\frac{1}{m_{2}}-\frac{1}{q}\right)}\left\|( u(0,x),\partial_{t}u(0,x)) \right\|_{\mathcal{D}},
	\end{equation*}
	\begin{equation*}
	\left\|((-\Delta)^{\sigma/2}u,\partial_{t}u)(t,\cdot)\right\|_{L^{q}(\mathbb{R}^{n})}\lesssim (1+t)^{-\frac{n}{\sigma}\left(\frac{1}{m_{2}}-\frac{1}{q}\right)}\left\|( u(0,x),\partial_{t}u(0,x)) \right\|_{\mathcal{D}}.
	\end{equation*}
\end{theorem}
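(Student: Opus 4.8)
The plan is to use the standard Banach fixed point argument in a suitable weighted evolution space. First I would define the solution space
$$
X(t):=\mathcal{C}^{0}\big([0,t],H^{\sigma,q}(\mathbb{R}^{n})\big)\cap \mathcal{C}^{1}\big([0,t],L^{q}(\mathbb{R}^{n})\big),
$$
equipped with the norm
$$
\|u\|_{X(t)}:=\sup_{0\le s\le t}\Big((1+s)^{-1+\frac{n}{\sigma}(\frac{1}{m_2}-\frac1q)}\|u(s,\cdot)\|_{L^q}+(1+s)^{\frac{n}{\sigma}(\frac{1}{m_2}-\frac1q)}\|((-\Delta)^{\sigma/2}u,\partial_t u)(s,\cdot)\|_{L^q}\Big),
$$
so that the decay rates match those provided by Corollary \ref{Linearestimates1} for the linear part. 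Writing $E_0(t)u_0+E_1(t)u_1$ for the solution operator of the linear problem \eqref{2.1}, the solution of \eqref{1.1} is a fixed point of
$$
Nu(t,x):=E_0(t)u_0+E_1(t)u_1+\int_0^t E_1(t-s)\,|u(s,\cdot)|^p\,ds,
$$
where $E_1(t-s)$ maps the source term at time $s$ forward. The goal is to prove the two estimates $\|Nu\|_{X(t)}\lesssim \|(u_0,u_1)\|_{\mathcal{D}}+\|u\|_{X(t)}^p$ and $\|Nu-Nv\|_{X(t)}\lesssim \big(\|u\|_{X(t)}^{p-1}+\|v\|_{X(t)}^{p-1}\big)\|u-v\|_{X(t)}$; these plus smallness of the data give a unique fixed point by the contraction principle, and letting $t\to\infty$ yields the global solution with the asserted decay.

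The linear part of $\|Nu\|_{X(t)}$ is immediate from Corollary \ref{Linearestimates1}. For the nonlinear (Duhamel) term one splits the time integral $\int_0^t=\int_0^{t/2}+\int_{t/2}^t$. On $[0,t/2]$ one estimates $\|E_1(t-s)|u(s)|^p\|_{L^q}$ using the $(L^{m_2}\cap L^q)$–$L^q$ estimate \eqref{2.2}–\eqref{2.3} (applied with data $u_1=|u(s)|^p$, $u_0=0$), which forces one to control $\||u(s)|^p\|_{L^{m_2}}=\|u(s)\|_{L^{m_2 p}}^p$ and $\||u(s)|^p\|_{L^q}=\|u(s)\|_{L^{qp}}^p$. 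On $[t/2,t]$ one instead uses the $L^q$–$L^q$ estimate \eqref{2.4}–\eqref{2.5}, which requires controlling only $\||u(s)|^p\|_{L^q}=\|u(s)\|_{L^{qp}}^p$. Both of these $L^{r}$ norms of $u(s)$ with $r\in\{m_2 p,\,qp\}$ are then recovered from the $X$-norm by the fractional Gagliardo–Nirenberg inequality (Lemma \ref{FGN}) interpolating $\|u(s)\|_{L^q}$ and $\|(-\Delta)^{\sigma/2}u(s)\|_{L^q}$; condition \eqref{3.2} is exactly what guarantees the interpolation parameter $\theta$ lies in the admissible range $[0,1]$ (equivalently $q\le rp<\frac{nq}{n-q\sigma}$ in the low-dimensional/supercritical-Sobolev sense). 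This produces time factors $(1+s)^{-\alpha p+(p-1)}$-type integrands against $(1+t-s)^{-\beta}$, and Lemma \ref{Integral inequality} collapses the integral; condition \eqref{3.3}, $p>1+\frac{2m_2\sigma}{n-m_2\sigma}$, is precisely the inequality ensuring that the relevant exponent $\max\{a,b\}>1$ so that no loss of decay (no extra logarithm or polynomial growth) occurs and the Duhamel term inherits the same weight as the linear part.

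The Lipschitz estimate is handled in the same way after writing $|u|^p-|v|^p\lesssim \big(|u|^{p-1}+|v|^{p-1}\big)|u-v|$ and applying Hölder's inequality to split the $L^{m_2}$ and $L^q$ norms of this difference into a product of an $L^{(p-1)r}$ norm of $u$ or $v$ and an $L^{r}$ norm of $u-v$, followed again by Gagliardo–Nirenberg and Lemma \ref{Integral inequality}. The continuity of $u$ in $t$ with values in $H^{\sigma,q}$ and of $\partial_t u$ with values in $L^q$ follows from the fixed point construction together with the continuity of the linear propagators. The main obstacle I expect is bookkeeping: verifying that the two regimes in \eqref{3.2} (the Sobolev-embedding restriction $p\le \frac{n}{n-q\sigma}$ when $q\sigma<n$, versus $p<\infty$ when $n\le q\sigma$) really do cover all the Gagliardo–Nirenberg admissibility constraints for \emph{both} exponents $m_2 p$ and $qp$ appearing in the two halves of the Duhamel integral, and checking that the restriction \eqref{3.1} on $n$ (which is the hypothesis of Corollary \ref{Linearestimates1}) is compatible with \eqref{3.2}–\eqref{3.3} so that the decay exponent $\frac{n}{\sigma}(\frac1{m_2}-\frac1q)$ used throughout is the one that is actually attained; everything else is a routine application of the three lemmas.
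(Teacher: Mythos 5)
Your proposal is correct and follows essentially the same route as the paper: the same weighted norm matching the rates of Corollary \ref{Linearestimates1}, the same splitting of the Duhamel integral at $t/2$ with $(L^{m_{2}}\cap L^{q})$--$L^{q}$ estimates on $[0,t/2]$ and $L^{q}$--$L^{q}$ estimates on $[t/2,t]$, Gagliardo--Nirenberg to recover the $L^{m_{2}p}$ and $L^{qp}$ norms under \eqref{3.2}, Lemma \ref{Integral inequality} with \eqref{3.3} to close the estimate, and the same H\"older-based Lipschitz step. Your reading of the role of each hypothesis (in particular \eqref{3.3} guaranteeing integrability of the $[0,t/2]$ piece without loss of decay) matches the paper's argument.
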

Our second result is read as follows. 
\begin{theorem}\label{global existence results 2}
	Let us consider the Cauchy problem (\ref{1.1})-(\ref{1.2}) with $\sigma\geq1$,  $p>1$ and $q\in(1,\infty)$, $m_{1}, m_{2} \in [1,q)$. Let us assume the following conditions
	\begin{equation}\label{3.4}
	m_{2}<m_{1} \ \ \text{and}\ \ \ \frac{m_{1}m_{2}\sigma}{m_{1}-m_{2}}\leq n.
	\end{equation}
	We assume the same conditions for $p$ as in (\ref{3.2}). Moreover, we suppose
	\begin{equation}\label{3.5}  
	p>\frac{m_{1}}{m_{2}}+\frac{m_{1}\sigma}{n}.
	\end{equation}
	Then we have the same conclusion as in Theorem (\ref{global existence results 1}). Furthermore, the unique solution $u$ satisfies the estimates:
	\begin{equation*}
	\left\|u(t,\cdot)\right\|_{L^{q}(\mathbb{R}^{n})} \lesssim (1+t)^{-\frac{n}{\sigma}\left(\frac{1}{m_{1}}-\frac{1}{q}\right)}\left\|( u(0,x),\partial_{t}u(0,x)) \right\|_{\mathcal{D}},
	\end{equation*}
	\begin{equation*}
	\left\|((-\Delta)^{\sigma/2}u,\partial_{t}u)(t,\cdot)\right\|_{L^{q}(\mathbb{R}^{n})}\lesssim (1+t)^{-\frac{n}{\sigma}\left(\frac{1}{m_{1}}-\frac{1}{q}\right)-1}\left\|( u(0,x),\partial_{t}u(0,x)) \right\|_{\mathcal{D}}.
	\end{equation*}
\end{theorem}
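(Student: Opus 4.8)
The plan is to build the solution by Banach's fixed point theorem in the evolution space
\[
X(t):=\mathcal{C}^{0}\big([0,t],H^{\sigma,q}(\mathbb{R}^{n})\big)\cap\mathcal{C}^{1}\big([0,t],L^{q}(\mathbb{R}^{n})\big),
\]
equipped with the norm
\[
\|u\|_{X(t)}:=\sup_{0\le s\le t}\Big[(1+s)^{\frac{n}{\sigma}(\frac{1}{m_{1}}-\frac{1}{q})}\|u(s,\cdot)\|_{L^{q}}+(1+s)^{\frac{n}{\sigma}(\frac{1}{m_{1}}-\frac{1}{q})+1}\big\|\big((-\Delta)^{\sigma/2}u,\partial_{t}u\big)(s,\cdot)\big\|_{L^{q}}\Big],
\]
which is exactly the decay pattern claimed in the statement and in Corollary \ref{Linearestimates2}. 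Writing $u^{\mathrm{lin}}$ for the solution of \eqref{2.1} with the prescribed data and $E_{1}(t,\cdot)$ for the propagator attached to the second data slot, I set
\[
Nu(t,x):=u^{\mathrm{lin}}(t,x)+\int_{0}^{t}E_{1}(t-s,\cdot)\ast_{(x)}|u(s,\cdot)|^{p}\,ds .
\]
It suffices to prove the two a priori bounds $\|Nu\|_{X(t)}\lesssim\|(u_{0},u_{1})\|_{\mathcal{D}}+\|u\|_{X(t)}^{p}$ and $\|Nu-Nv\|_{X(t)}\lesssim\|u-v\|_{X(t)}\big(\|u\|_{X(t)}^{p-1}+\|v\|_{X(t)}^{p-1}\big)$ with constants independent of $t$; for data of size $<\varepsilon_{0}$ with $\varepsilon_{0}$ small this makes $N$ a contraction on a small closed ball of $X(t)$, and letting $t\to\infty$ yields the unique global solution, the two decay estimates being read off from $\|\cdot\|_{X(t)}$.

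For the linear part, Corollary \ref{Linearestimates2} (valid precisely under \eqref{3.4}) gives $\|u^{\mathrm{lin}}\|_{X(t)}\lesssim\|(u_{0},u_{1})\|_{\mathcal{D}}$ without loss of decay. For the nonlinear part I first estimate the source, using $\||u(s,\cdot)|^{p}\|_{L^{r}}=\|u(s,\cdot)\|_{L^{pr}}^{p}$ and the fractional Gagliardo--Nirenberg inequality of Lemma \ref{FGN} with $s=0$, $q_{2}=q$, $q_{1}=pr$:
\[
\|u(s,\cdot)\|_{L^{pr}}\lesssim\big\|(-\Delta)^{\sigma/2}u(s,\cdot)\big\|_{L^{q}}^{\theta_{r}}\|u(s,\cdot)\|_{L^{q}}^{1-\theta_{r}},\qquad\theta_{r}=\tfrac{n}{\sigma}\big(\tfrac{1}{q}-\tfrac{1}{pr}\big).
\]
The admissibility $\theta_{r}\in[0,1]$ for $r=m_{2}$ forces $p\ge q/m_{2}$, and for $r=q$ forces $p\le n/(n-q\sigma)$ when $q\sigma<n$ (and is automatic when $n\le q\sigma$); this is exactly \eqref{3.2}. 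Inserting the definition of $\|u\|_{X(s)}$ then yields, after elementary arithmetic,
\[
\big\||u(s,\cdot)|^{p}\big\|_{L^{m_{2}}}\lesssim(1+s)^{-\left(\frac{np}{\sigma m_{1}}-\frac{n}{\sigma m_{2}}\right)}\|u\|_{X(s)}^{p},\qquad\big\||u(s,\cdot)|^{p}\big\|_{L^{q}}\lesssim(1+s)^{-\left(\frac{np}{\sigma m_{1}}-\frac{n}{\sigma q}\right)}\|u\|_{X(s)}^{p}.
\]

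Next I split $\int_{0}^{t}=\int_{0}^{t/2}+\int_{t/2}^{t}$. On $[0,t/2]$ I use the $(L^{m_{2}}\cap L^{q})-L^{q}$ estimates \eqref{2.2}--\eqref{2.3} with $m=m_{2}$ for $E_{1}$; since \eqref{3.4} rewrites as $\frac{n}{\sigma}\big(\frac{1}{m_{2}}-\frac{1}{q}\big)-1\ge\frac{n}{\sigma}\big(\frac{1}{m_{1}}-\frac{1}{q}\big)$, the propagator supplies the factors $(1+t-s)^{-\frac{n}{\sigma}(\frac{1}{m_{1}}-\frac{1}{q})}$, resp.\ $(1+t-s)^{-\frac{n}{\sigma}(\frac{1}{m_{1}}-\frac{1}{q})-1}$, which for $s\le t/2$ are $\sim(1+t)$ to the same powers; combined with the source bound above this reduces everything to the convergence of $\int_{0}^{t/2}(1+s)^{-(\frac{np}{\sigma m_{1}}-\frac{n}{\sigma m_{2}})}\,ds$, which is bounded uniformly in $t$ exactly when $\frac{np}{\sigma m_{1}}-\frac{n}{\sigma m_{2}}>1$, i.e.\ when $p>\frac{m_{1}}{m_{2}}+\frac{m_{1}\sigma}{n}$, that is \eqref{3.5}. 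On $[t/2,t]$ I use instead the $L^{q}-L^{q}$ estimates \eqref{2.4}--\eqref{2.5}; there $(1+s)\sim(1+t)$, so the contributions are controlled by $(1+t)^{2-(\frac{np}{\sigma m_{1}}-\frac{n}{\sigma q})}$, resp.\ $(1+t)^{1-(\frac{np}{\sigma m_{1}}-\frac{n}{\sigma q})}$, and these are dominated by $(1+t)^{-\frac{n}{\sigma}(\frac{1}{m_{1}}-\frac{1}{q})}$, resp.\ $(1+t)^{-\frac{n}{\sigma}(\frac{1}{m_{1}}-\frac{1}{q})-1}$, as soon as $p\ge 1+\frac{2\sigma m_{1}}{n}$; the point is that \eqref{3.4} is equivalent to $\frac{m_{1}}{m_{2}}+\frac{m_{1}\sigma}{n}\ge 1+\frac{2\sigma m_{1}}{n}$, so \eqref{3.5} already forces $p>1+\frac{2\sigma m_{1}}{n}$. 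Lemma \ref{Integral inequality} handles the remaining elementary time integrals. The Lipschitz estimate is obtained the same way, starting from $\big||u|^{p}-|v|^{p}\big|\lesssim|u-v|\big(|u|^{p-1}+|v|^{p-1}\big)$, H\"older's inequality and the same Gagliardo--Nirenberg bounds (this is where $p>1$ enters).

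The main obstacle is the bookkeeping in the Duhamel step: one must check that both pieces of the integral --- the early-time piece run through the $m_{2}$-regularity propagator and the late-time piece run through the $L^{q}$ propagator --- reproduce the target rate $(1+t)^{-\frac{n}{\sigma}(\frac{1}{m_{1}}-\frac{1}{q})}$ (and its energy analogue), and that \eqref{3.2}, \eqref{3.4}, \eqref{3.5} are precisely what make this possible. The least obvious ingredient is the algebraic identity that \eqref{3.4} is the same as $\frac{m_{1}}{m_{2}}+\frac{m_{1}\sigma}{n}\ge 1+\frac{2\sigma m_{1}}{n}$, which reconciles the constraint coming from the late-time piece with the single growth assumption \eqref{3.5}; once this is in place the fixed-point argument runs routinely, and since no additional time weight is generated, there is no loss of decay with respect to the linear estimates of Corollary \ref{Linearestimates2}.
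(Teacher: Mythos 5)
Your proposal is correct and follows essentially the same route as the paper: the same weighted norm \eqref{3.10}, the same Duhamel splitting at $t/2$ with the $(L^{m_2}\cap L^q)$--$L^q$ estimates on $[0,t/2]$ and the $L^q$--$L^q$ estimates on $[t/2,t]$, and the same Gagliardo--Nirenberg step producing the exponent conditions \eqref{3.2} and \eqref{3.5}. Your explicit verification that \eqref{3.4} is equivalent to $\frac{m_1}{m_2}+\frac{m_1\sigma}{n}\ge 1+\frac{2\sigma m_1}{n}$, so that \eqref{3.5} also controls the late-time piece, is a detail the paper leaves implicit ("Thanks to \eqref{3.4} we have directly"), and it is a welcome clarification rather than a deviation.
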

\begin{remark}
	We note that the conditions (\ref{3.3}) and (\ref{3.5}) ensure that the decay estimates of the unique solution coincide with those for solutions to the corresponding
	linear Cauchy problem (\ref{2.1}). While, applying the Gagliardo-Nirenberg inequality from Lemma (\ref{FGN}) leads to the bounds on $p$ and $n$ in (\ref{3.2}).
\end{remark}
\begin{remark} 
	The first theorem shows that the critical exponent in (\ref{1.3}) remains the same  only with an effect of $m_{2}$, this is due to the fact that we used the time decay estimates related to $u_{1}$, thanks to the condition (\ref{3.1}). 
\end{remark}
\begin{remark} 
	The lower bound of the spatial dimension $n$ in (\ref{3.1}) guarantees a decreasing time estimate of $\|u(t,\cdot)\|_{L^{q}}$. In addition, this condition makes the exponent in (\ref{3.3})  well defined.
\end{remark}
\begin{example}
	Let us illustrate our results with the following model
	\begin{equation*}
	\partial_{tt}u(t,x)-\Delta u(t,x)+(-\Delta)^{1/2}\partial_{t}u(t,x)=\left|u(t,x)\right| ^{p}, \ \ \sigma=1,
	\end{equation*}
	\begin{equation*}
	u(0,x)\in H^{1}(\mathbb{R}^{n})\cap L^{m_{1}}(\mathbb{R}^{n}) , \ \ \partial_{t}u(0,x)\in L^{2}(\mathbb{R}^{n})\cap L^{m_{2}}(\mathbb{R}^{n}), \ \ q=2,
	\end{equation*}
	we will show the admissible ranges of $m_{1}, m_{2}$ and $p$ as follow.
	
	Let us fix $n=2$, we have the following admissible ranges: 
	\begin{itemize}
		\item if $1\leq m_{1}\leq m_{2}<\frac{4}{3}$, then we can apply Theorem \ref{global existence results 1} and we get the global (in time) existence of small data solutions for any $p\in\left(1+\frac{2m_{2}}{2-m_{2}}, \infty\right),$
		\item for all $m_{1}\in[1,2)$, if $1\leq m_{2}<\min\left\lbrace \frac{4}{3},m_{1}\right\rbrace $, then we can apply Theorem \ref{global existence results 1} and we get the global (in time) existence of small data solutions for any $p\in\left(1+\frac{2m_{2}}{2-m_{2}}, \infty\right).$
	\end{itemize} 
	Let us now fix $n=3$. We distinguish the following admissible ranges:
	\begin{itemize}
		\item if $1\leq m_{1}\leq m_{2}<\frac{6}{5}$, then we can apply Theorem \ref{global existence results 1} and we get the global (in time) existence of small data solutions for any $p\in\left(1+\frac{2m_{2}}{3-m_{2}}, \infty\right),$
		\item if $m_{1}\in\left[ 1,\frac{3}{2}\right] $ and $1\leq m_{2}<\min\left\lbrace m_{1}, \frac{6}{5}\right\rbrace $, then we can apply Theorem \ref{global existence results 1} and we get the global (in time) existence of small data solutions for any $p\in\left(1+\frac{2m_{2}}{3-m_{2}}, 3\right],$ 
		\item if $m_{1}\in\left[ \frac{3}{2},2\right) $ and $\frac{3m_{1}}{3+m_{1}}\leq m_{2}<\min\left\lbrace m_{1}, \frac{6}{5}\right\rbrace $, then we can apply Theorem \ref{global existence results 1} and we get the global (in time) existence of small data solutions for any $p\in\left(1+\frac{2m_{2}}{3-m_{2}}, 3\right],$
		\item if $m_{1}\in\left[ \frac{3}{2},2\right) $ and $1\leq m_{2}\leq\frac{3m_{1}}{3+m_{1}}$, then we can apply Theorem \ref{global existence results 2} and we get the global (in time) existence of small data solutions for any $p\in\left(\frac{m_{1}}{m_{2}}+\frac{m_{1}}{3}, 3\right].$
	\end{itemize}
\end{example}
Theorem \ref{global existence results 1} will be proved in three steps using the contraction mapping principle. \\

Since we are dealing with semi-linear Cauchy problems, we use the Banach's fixed point theorem
inspired from the book \cite[page 303]{ebertreissig}, see also the paper \cite{dabbiccoreissig}.  The left hand side of (\ref{1.1}) is linear with respect to $u$. Therefore, we can apply the Duhamel's principle to (\ref{1.1}) and write the solutions $u$ as an integral form:
\begin{equation}
\begin{split}
u(t,x)=& L_{1}(t,x)\ast_{(x)} u(0,x)+L_{2}(t,x)\ast_{(x)} \partial_{t}u(0,x)+ \int_{0}^{t}L_{2}(t-s,x)\ast_{(x)}|u(s,x)|^{p}ds,\label{3.6}
\end{split}
\end{equation}
where the kernels $L_{1}$, $L_{2}$ are given in the papers \cite{phamkainanereissig}, \cite{d'Abbiccoebert}, \cite{daothesis}. Let $T>0$, $B(T)$ is a family of Banach space defined as follows 
$$
B(T):=\mathcal{C}^{0}\left([0,T], H^{\sigma,q}\right)\cap \mathcal{C}^{1}\left([0,T],L^{q}\right).
$$
Let us define the operator $\mathcal{O} : u\in B(T)\longrightarrow \mathcal{O}u\in B(T)$ by
\begin{equation*}
u\longmapsto \mathcal{O}u=u^{L}+u^{N},   
\end{equation*} 
where 
$$
u^{L}(t,x)=L_{1}(t,x)\ast_{(x)} u(0,x)+L_{2}(t,x)\ast_{(x)} \partial_{t}u(0,x),
$$
$$u^{N}(t,x)=\int_{0}^{t}L_{2}(t-s,x)\ast_{(x)}|u(s,x)|^{p}ds.$$
The approach needs to define a suitable norm $\|\cdot\|_{B(T)}$ for the above Banach space $B(T)$ so that if the operator $\mathcal{O}$ satisfies the following two inequalities:
\begin{align}
\|\mathcal{O}u\|_{B(T)} &\lesssim\left\|( u(0,x),\partial_{t}u(0,x)) \right\|_{\mathcal{D}}+\|u\|_{B(T)}^{p}, \label{3.7} \\ 
\|\mathcal{O}u_{1}-\mathcal{O}u_{2}\|_{B(T)} &\lesssim \|u_{1}-u_{2}\|_{B(T)}\Big( \|u_{1}\|_{B(T)}^{p-1}+\|u_{2}\|_{B(T)}^{p-1}\Big), \label{3.8} 
\end{align}
then one can deduce the existence of a unique global (in time) solution satisfies $u=\mathcal{O}u$.

We clarify that the condition $ \left\|( u(0,x),\partial_{t}u(0,x)) \right\|_{\mathcal{D}}<\varepsilon$ makes the operator $\mathcal{O}$ maps balls of $B(T)$ into balls of $B(T)$.\\
In order to prove Theorem \ref{global existence results 1} we choose the following suitable norm
\begin{align}
\|u\|_{B(T)}&=\sup_{0\leq t\leq T}\Big((1+t)^{\frac{n}{\sigma}\left(\frac{1}{m_{2}}-\frac{1}{q}\right)-1}\|u(t,\cdot)\|_{L^{q}}+(1+t)^{\frac{n}{\sigma}\left(\frac{1}{m_{2}}-\frac{1}{q}\right)}\|((-\Delta)^{\sigma/2}u,\partial_{t}u)(t,\cdot)\|_{L^{q}}\Big), \label{3.9}
\end{align}
while to prove Theorem \ref{global existence results 2} we choose the following suitable norm 
\begin{align}
\|u\|_{B(T)}&=\sup_{0\leq t\leq T}\Big((1+t)^{\frac{n}{\sigma}\left(\frac{1}{m_{1}}-\frac{1}{q}\right)}\|u(t,\cdot)\|_{L^{q}}+(1+t)^{\frac{n}{\sigma}\left(\frac{1}{m_{1}}-\frac{1}{q}\right)+1}\|((-\Delta)^{\sigma/2}u,\partial_{t}u)(t,\cdot)\|_{L^{q}}\Big). \label{3.10}
\end{align}
Let us start with the proof of the first theorem. We divide the proof into three steps.
\begin{proof}[Proof of Theorem \ref{global existence results 1}]
	In the first step, we verify that $u^{L} \in B(T)$.\\
	\textit{Step 1:} Using Corollary \ref{Linearestimates1}, we deduce:
	\begin{align*}
	\|u^{L}\|_{B(T)}&=\sup_{0\leq t\leq T}\Big((1+t)^{\frac{n}{\sigma}\left(\frac{1}{m_{2}}-\frac{1}{q}\right)-1}\|u^{L}(t,\cdot)\|_{L^{q}}+(1+t)^{\frac{n}{\sigma}\left(\frac{1}{m_{2}}-\frac{1}{q}\right)}\|((-\Delta)^{\sigma/2}u^{L},\partial_{t}u^{L})(t,\cdot)\|_{L^{q}}\Big)
	\nonumber \\
	&\hspace{0.5cm}
	\lesssim \left\|(u(0,x),\partial_{t}u(0,x))\right\|_{\mathcal{D}}.
	\end{align*} 
	\textit{Step 2:} To conclude inequality (\ref{3.7}), we need to prove:
	\begin{equation}\label{3.11}
	\|u^{N}\|_{B(T)}\lesssim \|u\|_{B(T)}^{p}.
	\end{equation}
	To do this, we proceed as follows:
	\begin{align*}
	\|u^{N}(t,\cdot)\|_{L^{q}}&\lesssim\int_{0}^{t/2}(1+t-s)^{1-\frac{n}{\sigma}\left(\frac{1}{m_{2}}-\frac{1}{q}\right)}\left\|u(s,\cdot)\right\|^{p} _{L^{qp}\cap L^{m_{2}p}} ds
	\end{align*}
	\begin{align}
	+\int_{t/2}^{t}(1+t-s) \left\|u(s,\cdot)\right\|^{p} _{L^{qp}} ds, \label{3.12}
	\end{align}
	\begin{align*}
	\|((-\Delta)^{\sigma/2}u^{N},\partial_{t}u^{N})(t,\cdot)\|_{L^{q}}&\lesssim\int_{0}^{t}(1+t-s)^{-\frac{n}{\sigma}\left(\frac{1}{m_{2}}-\frac{1}{q}\right)} \left\|u(s,\cdot)\right\|^{p} _{L^{qp}\cap L^{m_{2}p}}ds
	\end{align*}
	\begin{align}
	+\int_{t/2}^{t}\left\|u(s,\cdot)\right\|^{p} _{L^{qp}} ds. \label{3.13}
	\end{align}
	Before applying the fractional Gagliardo-Nirenberg inequality from Lemma \ref{FGN} to estimate the norms 
	$\left\|u(s,\cdot)\right\|^{p} _{L^{rp}},  r=m_{2},q,$
	we should know from (\ref{3.9}) that:
	\begin{equation*}
	(1+s)^{\frac{n}{\sigma}\left(\frac{1}{m_{2}}-\frac{1}{q}\right)-1+\frac{a}{\sigma}}\|(-\Delta)^{a/2}u(s,\cdot)\|_{L^{q}}\lesssim\|u\|_{B(T)}, \ \ \ a=0,\sigma.
	\end{equation*}
	So, we can estimate the above norms as follows:
	\begin{equation*}
	\left\|u(s,\cdot)\right\| _{L^{r p}}^{p}\lesssim(1+s)^{-\frac{p(n-m_{2}\sigma)}{m_{2}\sigma}+\frac{n}{r\sigma}}\|u\|_{B(T)}^{p}, \ \ r=m_{2}, q,
	\end{equation*}
	where the conditions in (\ref{3.2}) are satisfied. Now, (\ref{3.12})-(\ref{3.13}) become:
	\begin{align*}
	\|u^{N}(t,\cdot)\|_{L^{q}}&\lesssim\|u\|_{B(T)}^{p}(1+t)^{1-\frac{n}{\sigma}\left(\frac{1}{m_{2}}-\frac{1}{q}\right)}\int_{0}^{t/2}(1+s)^{-\frac{p(n-m_{2}\sigma)}{m_{2}\sigma}+\frac{n}{m_{2}\sigma}}ds
	\end{align*}
	\begin{align*}
	\hspace{2cm}+ \|u\|_{B(T)}^{p}(1+t)^{-\frac{p(n-m_{2}\sigma)}{m_{2}\sigma}+\frac{n}{q\sigma}} \int_{t/2}^{t}(1+t-s)ds, 
	\end{align*}
	\begin{align*}
	\|((-\Delta)^{\sigma/2}u^{N},\partial_{t}u^{N})(t,\cdot)\|_{L^{q}}&\lesssim \|u\|_{B(T)}^{p}(1+t)^{-\frac{n}{\sigma}\left(\frac{1}{m_{2}}-\frac{1}{q}\right)}\int_{0}^{t/2}(1+s)^{-\frac{p(n-m_{2}\sigma)}{m_{2}\sigma}+\frac{n}{m_{2}\sigma}}ds
	\end{align*}
	\begin{align*}
	\hspace{3cm}+ \|u\|_{B(T)}^{p}(1+t)^{-\frac{p(n-m_{2}\sigma)}{m_{2}\sigma}+\frac{n}{q\sigma}} \int_{t/2}^{t}ds,
	\end{align*}
	where we used the equivalences
	$$(1+t-s)\approx (1+t) \ if\ s\in[0,t/2], \ \ (1+s)\approx (1+t)\ if\ s\in[t/2,t]. $$
	Thanks to Lemma \ref{Integral inequality} together with (\ref{3.3}), all the integrals can be estimated immediately 
	$$
	(1+t)^{\frac{n}{\sigma}\left(\frac{1}{m_{2}}-\frac{1}{q}\right)-1}\|u^{N}(t,\cdot)\|_{L^{q}}\lesssim\|u\|_{B(T)}^{p},
	$$ 
	$$ 
	(1+t)^{\frac{n}{\sigma}\left(\frac{1}{m_{2}}-\frac{1}{q}\right)}\|((-\Delta)^{\sigma/2}u^{N},\partial_{t}u^{N})(t,\cdot)\|_{L^{2}}\lesssim\|u\|_{B(T)}^{p}.
	$$
	In this way we proved
	$$\|\mathcal{O}u\|_{B(T)}=\|u^{L}+u^{N}\|_{B(T)} \lesssim\left\|( u(0,x),\partial_{t}u(0,x)) \right\|_{\mathcal{D}}+\|u\|_{B(T)}^{p}.$$
	\textit{Step 3:} Let us briefly show the proof of (\ref{3.8}). We choose two elements $u_{1}$, $u_{2}$ belong to $B(T)$, and we write
	$$\mathcal{O}u_{1}-\mathcal{O}u_{2}:=u_{1}^{N}-u_{2}^{N}=\int_{0}^{t}L_{2}(t-s,x)\ast( |u_{1}(s,x)|^{p}-|u_{2}(s,x)|^{p})ds.$$
	Here, we do as in step 2, we have again:
	$$ 	\|(u_{1}^{N}-u_{2}^{N})(t,\cdot)\|_{L^{q}}\lesssim\int_{0}^{t/2}(1+t-s)^{1-\frac{n}{\sigma}\left(\frac{1}{m_{2}}-\frac{1}{q}\right)}\left\||u_{1}(s,\cdot)|^{p}-|u_{2}(s,\cdot)|^{p}\right\|_{L^{m_{2}}\cap L^{q}}ds
	$$
	$$	+\int_{t/2}^{t}(1+t-s)\left\||u_{1}(s,\cdot)|^{p}-|u_{2}(s,\cdot)|^{p}\right\| _{L^{q}}ds, $$
	and
	$$	\|((-\Delta)^{\sigma/2}(u_{1}^{N}-u_{2}^{N}),\partial_{t}(u_{1}^{N}-u_{2}^{N}))(t,\cdot)\|_{L^{q}}\lesssim \int_{0}^{t/2}(1+t-s)^{-\frac{n}{\sigma}\left(\frac{1}{m_{2}}-\frac{1}{q}\right)}\left\||u_{1}(s,\cdot)|^{p}-|u_{2}(s,\cdot)|^{p}\right\| _{L^{m_{2}}\cap L^{q}}ds$$
	$$+\int_{t/2}^{t}\left\||u_{1}(s,\cdot)|^{p}-|u_{2}(s,\cdot)|^{p}\right\|_{L^{q}}ds.
	$$
	To calculate the above integrals, we use the H\"{o}lder's inequality which gives
	\begin{equation*}
	\||u_{1}(s,\cdot)|^{p}-|u_{2}(s,\cdot)|^{p}\|_{L^{r}}\lesssim \|(u_{1}-u_{2})(s,\cdot)\|_{L^{r p}}\left(\|u_{1}(s,\cdot)\|_{L^{r p}}^{p-1}+\|u_{2}(s,\cdot)\|_{L^{r p}}^{p-1} \right),
	\end{equation*}
	as well as use the definition of the norms
	$$\|u_{1}-u_{2}\|_{B(T)}, \ \ \|u_{1}\|_{B(T)}, \ \ \|u_{2}\|_{B(T)}.$$ and the fractional Gagliardo-Nirenberg inequality we get (\ref{3.8}). The proof of Theorem \ref{global existence results 1} is completed.
\end{proof}	
\begin{proof}[Proof of Theorem \ref{global existence results 2}]
	Now, let us briefly mention the proof of Theorem \ref{global existence results 2}.\\
	\textit{Step 1:} Using Corollary \ref{Linearestimates2} and (\ref{3.10}) we deduce again:
	\begin{align*}
	\|u^{L}\|_{B(T)}&=\sup_{0\leq t\leq T}\Big((1+t)^{\frac{n}{\sigma}\left(\frac{1}{m_{1}}-\frac{1}{q}\right)}\|u^{L}(t,\cdot)\|_{L^{q}}
	+(1+t)^{\frac{n}{\sigma}\left(\frac{1}{m_{1}}-\frac{1}{q}\right)+1}\|((-\Delta)^{\sigma/2}u^{L},\partial_{t}u^{L})(t,\cdot)\|_{L^{q}}\Big)\nonumber \\
	&\hspace{0.5cm} \lesssim \left\|(u(0,x),\partial_{t}u(0,x))\right\|_{\mathcal{D}}.
	\end{align*}
	\textit{Step 2:} To prove (\ref{3.11}), we come to estimate $$\left\|u(s,\cdot)\right\|^{p} _{L^{rp}}, \ \ r=m_{2},q.$$
	So, recalling again from (\ref{3.10})
	\begin{equation*}
	(1+s)^{\frac{n}{\sigma}\left(\frac{1}{m_{1}}-\frac{1}{q}\right)+\frac{a}{\sigma}}\|(-\Delta)^{a/2}u(s,\cdot)\|_{L^{q}}\lesssim\|u\|_{B(T)}, \ \ \ a=0,\sigma.
	\end{equation*}
	Hence, we have the following estimates 
	\begin{equation*}
	\left\|u(s,\cdot)\right\| _{L^{r p}}^{p}\lesssim(1+s)^{-\frac{np}{m_{1}\sigma}+\frac{n}{r\sigma}}\|u\|_{B(T)}^{p}, \ \ r=m_{2}, q,
	\end{equation*}
	where the conditions in (\ref{3.2}) are satisfied.
	Now, (\ref{3.12})-(\ref{3.13}) become
	\begin{align*}
	\|u^{N}(t,\cdot)\|_{L^{2}}&\lesssim\|u\|_{B(T)}^{p}(1+t)^{1-\frac{n}{\sigma}\left(\frac{1}{m_{2}}-\frac{1}{q}\right)}\int_{0}^{t/2}(1+s)^{-\frac{np}{m_{1}\sigma}+\frac{n}{m_{2}\sigma}}ds
	\end{align*}
	\begin{align*}
	+\|u\|_{B(T)}^{p}(1+t)^{-\frac{np}{m_{1}\sigma}+\frac{n}{q\sigma}} \int_{t/2}^{t}(1+t-s)ds, 
	\end{align*}
	\begin{align*}
	\|((-\Delta)^{\sigma/2}u^{N},\partial_{t}u^{N})(t,\cdot)\|_{L^{2}}&\lesssim\|u\|_{B(T)}^{p}(1+t)^{-\frac{n}{\sigma}\left(\frac{1}{m_{2}}-\frac{1}{q}\right)}\int_{0}^{t/2}(1+s)^{-\frac{np}{m_{1}\sigma}+\frac{n}{m_{2}\sigma}}ds
	\end{align*}
	\begin{align*}
	+\|u\|_{B(T)}^{p}(1+t)^{-\frac{np}{m_{1}\sigma}+\frac{n}{q\sigma}} \int_{t/2}^{t}ds.
	\end{align*}
	The condition (\ref{3.5}) ensures that the integral over $[0,t/2]$ is bounded. Thanks to (\ref{3.4}) we have directly 
	$$
	(1+t)^{\frac{n}{\sigma}\left(\frac{1}{m_{1}}-\frac{1}{q}\right)}\|u^{N}(t,\cdot)\|_{L^{q}}\lesssim\|u\|_{B(T)}^{p},
	$$ 
	$$ 
	(1+t)^{\frac{n}{\sigma}\left(\frac{1}{m_{1}}-\frac{1}{q}\right)+1}\|((-\Delta)^{\sigma/2}u^{N},\partial_{t}u^{N})(t,\cdot)\|_{L^{2}}\lesssim\|u\|_{B(T)}^{p}.
	$$
	In this way we proved inequality (\ref{3.7}). One can prove (\ref{3.8}) as above. Hence, the proof of Theorem \ref{global existence results 2} is completed.
\end{proof}
\section*{Conclusion}
In this paper, we proved how the critical exponent (\ref{1.3})  is really affected by the different additional $L^{m}$ regularity of the Cauchy data. In particular, our results coincide with previous results when $m_{1}=m_{2}$. We recall that the two exponents (\ref{3.3}) and (\ref{3.5}) are imposed only in order to have the global (in time) existence, they coincide in case 
$$m_{2}<m_{1}, \ \ n=\frac{m_{1}m_{2}\sigma}{m_{1}-m_{2}}.$$ So, we can ask the following question:
$$\textit{What exactly is the critical exponent of the Cauchy problem (\ref{1.1})-(\ref{1.2}) ?}$$

\end{document}